\def\Bbb{\mathbb}
\def\S{\Bbb S}
\def\R{{\Bbb R}}
\def\CC{{\Bbb C}}	
\def\C{{\mathcal C}}	
\def\d{{\delta}}
\def \supp {\text{\rm car\,}}
\def \mass {\text{\rm mass\,}}
\def\e{\varepsilon}
\renewcommand{\epsilon}{\varepsilon}
\newcommand{\dd}{\mathrm{d}}
\def\Bbb{\mathbb}
\def\E{{\mathcal E}}
\newtheorem{thm}{Theorem}[section]
\newtheorem{cor}[thm]{Corollary}
\newtheorem{lemma}[thm]{Lemma}
\newtheorem{remark}[thm]{Remark}
\newtheorem{example}[thm]{Example}
\begin{document}

\title[Factorisation in Fourier Restriction Theory]{Factorisation in Fourier Restriction Theory and near Extremisers}
\date{\today}

\author[S. Buschenhenke]{Stefan Buschenhenke}
\address{Stefan Buschenhenke:  Mathematisches Seminar, C.A.-Universit\"at Kiel,
Heinrich-Hecht-Platz 6, D-24118 Kiel, Germany}
\email{{\tt buschenhenke@math.uni-kiel.de}}
\urladdr{http://www.math.uni-kiel.de/analysis/de/buschenhenke}

\begin{abstract} 
We give an alternative argument to the application of the so-called Maurey-Nikishin-Pisier factorisation in Fourier restriction theory. Based on an induction-on-scales argument, our comparably simple method applies to any compact quadratic surface, in particular compact parts of the paraboloid and the hyperbolic paraboloid. This is achieved by constructing near extremisers with big "mass", which itself might be of interest.
\end{abstract}

\maketitle



\section{Introduction}\label{intro}
\subsection{Historical background}
The restriction problem for the Fourier transform, introduced by Stein \cite{St1} in the seventies, asks for which values of $1\leq r,s\leq\infty$ the restriction of the Fouriertransform $Rf=\hat f|_S$ to a given submanifold $S\subset \R^{n+1}$ gives a bounded operator, i.e., for which values of $1\leq r,s\leq\infty$ is 
$$\left(\int_S |\hat f|^r\dd\sigma\right)^{1/r} \leq C \|f\|_{L^s(\R^{n+1})}$$
for all $f\in\mathcal{S}(\R^{n+1})$? Here, $\sigma$ denotes the surface measure of $S$, and for $r=\infty$, the left-hand-side has to be modified the obvious way.\\
In restriction theory, it is often useful to study the adjoint restriction operator associated to $S$, the \emph{extension operator} 
$$\E(f)(x,t)=\widehat{f\dd\sigma}(x,t).$$
For $S$ being a compact n-dimensional hypersurface with non-vanishing Gaussian curvature, the famous restriction conjecture proposed by Stein in the 70's is one of the hardest problems in harmonic analysis and has inspired many groundbreaking work.
In the language of the adjoint setting, the conjecture states that the extension operator is bounded from $L^q(S)$ to $L^p(\R^{n+1})$ if and only if $p>\frac{2n+2}{n}$ and $\frac{1}{q'}\geq\frac{n+2}{p}$ (Fig. 1).
Testing on the constant function, or respectively on a characteristic function of a small ball shows that both conditions are indeed necessary. 

\begin{figure}
\includegraphics[scale=0.6]{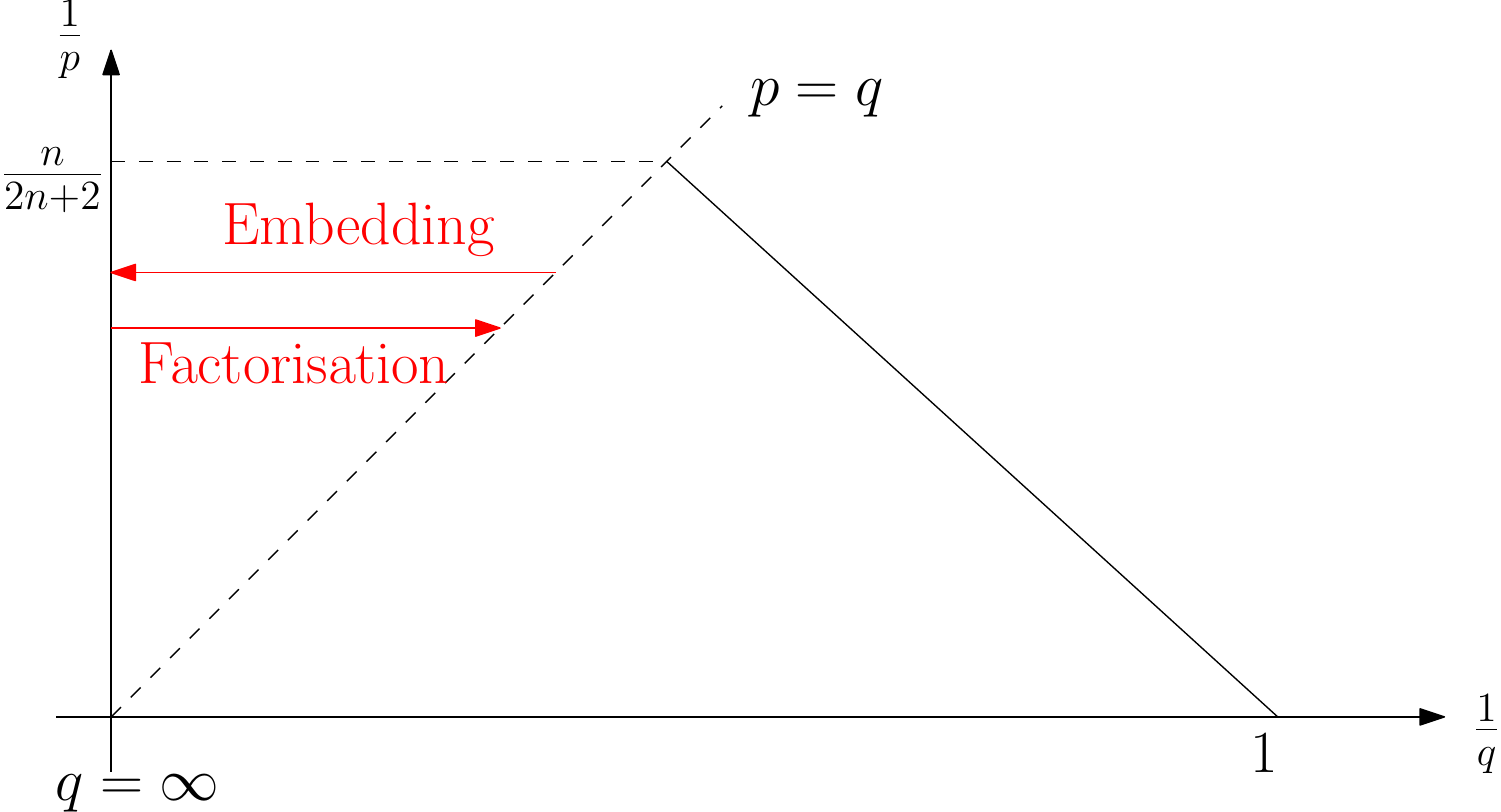}
\caption{Conjectured range for boundedness of the extension operator}
\end{figure}

For the important Hilbert space case $q=2$, the conjecture was confirmed  Stein and Tomas [To75], [St86]. Further important contributions were due to J. Bourgain in the nineties (\cite{Bo1}, \cite{Bo2}, \cite{Bo3}), and the introduction of the so-called bilinear method by Tao, Moyu, Vargas, Vega and Wulff (\cite{MVV1}, \cite{MVV2}, \cite{TVV} \cite{TV1}, \cite{TV2},  \cite{W2}, \cite{T2}). A new approach to the problem has been the study of multilinear variants of the extension operator in \cite{BCT}, which in turn imply estimates for the "standard" linear extension operator \cite{BoG}. Recently, a major step forward was achieved by the polynomial partitioning method, developed by L. Guth and N.H. Katz (\cite{GK}, \cite{Gu16},
\cite{Gu17}) (see also \cite{hr19} and \cite{WA18} for recent improvements).\\
Observe that the adjoint restriction estimate for $q=1$ and $p=\infty$ is trivial, and a $L^q(S)$-$L^p(\R^{n+1})$-estimate implies a $L^{\tilde q}(S)$-$L^p(\R^{n+1})$-estimate for any $\tilde q> q$ by Hölders inequality, since $S$ has finite measure.
On the other hand, the two critical lines $p=\frac{2n+2}{n}$ and $\frac{1}{q'}=\frac{n+2}{p}$ intersect on the diagonal at $q=p=\frac{2n+2}{n}$, and proving a estimate for $q=p$ arbitrary close to the critical value $\frac{2n+2}{n}$ is arguably the hardest case, as one then obtains $L^q(S)$-$L^p(\R^{n+1})$-boundedness for the open range $p>\frac{2n+2}{n}$ and $\frac{1}{q'}>\frac{n+2}{p}$ by interpolating with the trivial $L^1(S)$ to $L^\infty(\R^{n+1})$ estimate and by Hölder. It is even possible to obtain estimates for the endpoint line $\frac{1}{q'}=\frac{n+2}{p}$ by switching to a bilinear regime and interpolating with any bilinear estimate above the critical line, though we will not explain this in detail.\\
Now, to prove extension estimates on the diagonal, at least for $S$ being the sphere, it is indeed sufficient to prove $L^\infty(S)$-$L^p(\R^{n+1})$-estimates by using the Nikishin-Maurey-Pisier factorisation argument. 
To be more precise, for any fixed $p\geq 2$,
the following weak type estimate for the restriction operator $R$
\begin{align}\label{pinf}
	\| R f\|_{L^{1,\infty}(S)} \lesssim \|f\|_{L^{p'}(\R^{n+1})}
\end{align}
is equivalent to
\begin{align}\label{pp}
	\| R f\|_{L^{p',\infty}(S)} \lesssim \|f\|_{L^{p'}(\R^{n+1})}.
\end{align}
In other words, $R$ factors through $L^{p',\infty}(S)$, via composition with the embedding $L^{p',\infty}(S)\hookrightarrow L^{1,\infty}(S)$.
That \eqref{pp} implies \eqref{pinf} is trivial by embedding of Lorentz spaces, the converse is far from being obvious; this is what we call the factorisation argument (cf. Fig. 1) and for the sphere is due to Bourgain \cite{Bo1}.\\
Bascially, this factorisation argument consist of two ingredients, the actual, general Nikishin-Maurey-Pisier factorisation, plus a invariance consideration as final step.\\

The main part, the Nikishin-Maurey-Pisier factorisation argument, indeed works in a much more general setting, namely sublinear operators on Banach spaces of Rademacher type $p'$, but we will not dwell on that. For an excellent survey on this argument, we refer the interested reader to the book of Garcia-Cuerva and Rubio de Francia \cite{pgc}, which builds upon the work of Nikishin \cite{N1,N2} and Maurey \cite{M73,M74}.\\
The Nikishin-Maurey-Pisier factorisation as in Theorem 1.7 and 2.4 in Chapter VI in \cite{pgc}, simplified for our purposes,  states that \eqref{pinf} implies the following so-called Nikishin condition:\\
For any $\e>0$, there exist a constant $C_\epsilon$ and a set $E\subset S$ such that
\begin{align*}
		\sigma(S\backslash E)<&\epsilon \qquad\text{and}\\
		\sigma(\xi\in E:|Rf(\xi)|>\lambda)\leq& C_\epsilon \left(\frac{\|f\|_p}{\lambda}\right)^p
\end{align*}
for all $f\in L^p(\R^{n+1})$ and $\lambda>0$. This is almost a weak type $p$ estimate, but for the exceptional set $S\backslash E$, which is small. So far the argument works 
even for the aforementioned more general operators. \\
The proof of the Nikishin-Maurey-Pisier factorisation in \cite{pgc} uses as two major tools: Firstly Khintchine's inequality, and secondly Zorn's Lemma. Both will in some sense feature in our proof as well, but only in a very simplified form. The complexity of our "Khintchine inequality" will be at the level of the parallelogram identity, and instead of the Lemma of Zorn or the equivalent axiom of choice, we will simply have to choose a representative from a set of at most two elements.
The reason for this perceived simplification is that we apply them repeatedly, as we will use an inductive argument. We believe that this argument is arguably easier accessible and more natural, at least in the Fourier restriction community, where induction on scales methods are standard by now.\\

Apart from this Nikishin-Maurey-Pisier factorisation, to deal with the above exceptional set, an extra invariance argument is needed. For the surface in question being the sphere, this crucial final step then relies on the rotational invariance of spheres, see Bourgain \cite{Bo1}. According to Bourgain in the same paper, this generalises to the paraboloid, although he does not give a proof and it seems less obvious to the author. The unbounded paraboloid comes with invariance under affine transformations, but restriction estimate \eqref{pinf} is known to fail for the unbounded paraboloid due to scaling considerations. A compact part of the paraboloid, on the other hand, is not invariant under affine transformations.\\
However, our approach is somewhat different, and we will be able to adapt the invariance argument to a quadratic surface setting to meet our purposes in Section \ref{sec:invar}.\\

\subsection{Localisation}
It is a by now standard to consider local estimates, that is, instead of attacking restriction estimates
\begin{align}\label{globrest}
	\|\E f\|_{L^p(\R^n)}\leq C \|f\|_{L^q(S)},
\end{align}
directly, one deals with the localised versions
\begin{align}\label{locrest}
	\|\E f\|_{L^p(B_R)}\leq C_\epsilon R^\epsilon \|f\|_{L^q(S)}, 
\end{align}
where $B_R$ is a ball  of radius $R$. Note that the estimate is invariant in the centre of $B_R$, as a translation of the centre only amounts to a harmless modulation of $f$. Indeed, by certain $\e$-removal techniques, \eqref{globrest} essentially (up to an endpoint) reduces to proving \eqref{locrest} for arbitrary small $\e$ , see for instance Theorem 1.2 in \cite{T99} or Theorem 5.3 in \cite{k17} for a later version.\\
At the first glance stronger, but indeed equivalent to the local restriction estimate is
\begin{align}\label{locrest2}
	\|\E f\|_{L^p(T_R)}\leq C_\epsilon R^\epsilon \|f\|_{L^q(S)}, 
\end{align}
where $T_R$ now is the horizontal plate $T_R=\R^n\times[-R,R]$ of thickness $2R$. 
Clearly \eqref{locrest2} implies \eqref{locrest}, while the converse uses certain orthogonality arguments. A detailed proof can be found in Lemma 5.1 in \cite{bmv20}.\\

\subsection{The main result}

In order to present our main result, let us introduce some notation.
Let  $\Omega=[0,1]^n$ and $S=\{(\xi,Q(\xi))|\xi\in\Omega\}$, where $Q$ is any quadratic form on $\R^n$. There is no restriction on the signature of $Q$, nor do we require any bounds on the eigenvalues of $Q$.\\
 We will call the surface $S$ a quadric and frequently identify a function $f\in L^1(S)$ with the function $\xi\to f(\xi,Q(\xi))$ defined on $\Omega$, and write $f(\xi)$ when we mean $f(\xi,Q(\xi))$. With the same understanding, we will also write $L^q(S)$ where we mean $L^q(\Omega)$.
Let $\E$ be the adjoint restriction or extension operator associated to $S$, that is
$$\E(f)(x,t)=\int_\Omega f(\xi)e^{ix\cdot\xi+tQ(\xi)}\dd\sigma(\xi).$$
For convenience, we will denote the adjoint restriction operator
 for the whole quadric\\ $\{(\xi,Q(\xi))|\xi\in\R^n\}$ by $\E$ as well whenever that does not lead to any ambiguity.\\

Since we are aiming at a restricted weak type estimate, we will restrict ourselves to a sufficiently rich subset of $L^q(S)$-functions, including all characteristic functions. However, in order to allow more flexibility, we will also allow signs. 
We define the set of signed characteristic functions: $$D:=\{f:\Omega\to\{0\}\cup\S^1|f\text{ is measurable}\},$$
where $\S^1:=\{z\in\CC:|z|=1\}$.

 For $f\in D$, define the "carrier" $\supp f:=\{f\neq0\}=\{|f|=1\}$ of $f\in D$. We will refer to $\mass(f):=|\supp f|$ as the "mass" of $f$

Note that $D$ is not a vector space, as for $f,g\in D$, in general $f+g\not\in D$, since their carriers might overlap. However, still $(f+g)\chi_{(\supp f\cap\supp g)^c}\in D$, and that will suffice for our considerations. Some further simple remarks about the set $D$:

\begin{remark}
Let $f,g\in D$, $E\subset\Omega$ measurable, and $\psi$ a measurable and real-valued function on $\Omega$. Then
\begin{enumerate}
\item $fg\in D$
\item  $-f\in D$
\item $e^{i\psi(\cdot)}f\in D$ 
\item$f\chi_E\in D$.
\end{enumerate}
\end{remark}

We fix $p$. For $1\leq q\leq\infty$ and $T\subset\R^{n+1}$ measurable, let $\C_q(T)$ denote the best constant $C$ such that
$$\|\E f\|_{L^p(T)}\leq C \|f\|_{L^q(S)} $$
for all $f\in D$. 
Here, we will consider horizontal plates $$T_R=\R^n\times[-R,R]$$ of thickness $2R$. Abusing notation, we will write $\C_q(R)$ when we mean $\C_q(T_R)$.\\
By trivial considerations, $\C_q(R)\lesssim R^{1/p}<\infty$ for all $q\geq p'$.

Note that if $I$ is any interval of length $|I|=2R$, then
$$\C_q(\R^n\times I) = \C_q(\R^n\times [-R,R]) $$
 since $\E f(x,t+s)=\E(e^{isQ}f)$ and $D$ is closed under modulations (iii).

Another small but important observation is: Since $[-2R,2R]$ is the union of the two intervals $[-2R,0]$ and $[0,2R]$ of length $2R$, we have the following doubling property:
\begin{remark}\label{double}
\begin{align}
	\C_q(2R)\leq 2^{\frac1p}\C_q(R).
\end{align}
\end{remark}

We now present our main result.

\begin{thm}\label{mainthm}  For any $p< q\leq\infty$ there exists a constant $C_1>1$ such that for all $R>1$
\begin{align}
	\C_\infty(R) \leq \C_q(R) \leq C_1 \C_\infty(R).
\end{align}
\end{thm}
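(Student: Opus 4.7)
The inequality $\C_\infty(R) \leq \C_q(R)$ is automatic from $|\Omega|=1$: for any $f\in D$ one has $\|f\|_{L^q(S)}=\mass(f)^{1/q}\leq 1=\|f\|_{L^\infty(S)}$, so $\|\E f\|_{L^p(T_R)}\leq \C_q(R)\|f\|_{L^q(S)}\leq \C_q(R)$, and taking the supremum over $f\in D$ yields the claim.

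For the reverse inequality my plan is to prove the pointwise-in-mass estimate
\begin{align*}
	\|\E f\|_{L^p(T_R)}\lesssim \mass(f)^{1/p}\,\C_\infty(R)\qquad\text{for every }f\in D.
\end{align*}
Since $\|f\|_{L^q(S)}=\mass(f)^{1/q}$ and $p<q$ forces $\mass(f)^{1/p-1/q}\leq 1$, this immediately gives $\|\E f\|_{L^p(T_R)}/\|f\|_{L^q(S)}\lesssim \C_\infty(R)$, hence $\C_q(R)\lesssim \C_\infty(R)$. I would establish the pointwise-in-mass bound by an induction on scales in the mass variable, via a \emph{doubling lemma}: for every $f\in D$ with $M=\mass(f)\leq 1/2$ there exists $\tilde f\in D$ with $\mass(\tilde f)\approx 2M$ and $\|\E\tilde f\|_{L^p(T_R)}\geq 2^{1/p}\|\E f\|_{L^p(T_R)}$, up to a small multiplicative loss that is summable along the $\lceil\log_2(1/M)\rceil$ iterations. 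Iterating until the mass reaches order $1$ and using that $\|\E h\|_{L^p(T_R)}\leq \C_\infty(R)$ for any $h\in D$, one unwinds the doubling to obtain the required mass-dependent bound on $\|\E f\|_{L^p(T_R)}$.

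The doubling step is realised as $\tilde f=f+\varepsilon g$, where $g\in D$ has carrier disjoint from $\supp f$, mass equal to $\mass(f)$, and $\|\E g\|_{L^p(T_R)}=\|\E f\|_{L^p(T_R)}$; the sign $\varepsilon\in\{+1,-1\}$ is the ``choice from at most two elements'' mentioned in the introduction, picked to maximise $\|\E(f\pm g)\|_{L^p(T_R)}$. Clarkson's inequality for $p\geq 2$ (the parallelogram identity when $p=2$),
\begin{align*}
	\|\E(f+g)\|_{L^p(T_R)}^p+\|\E(f-g)\|_{L^p(T_R)}^p\geq 2\bigl(\|\E f\|_{L^p(T_R)}^p+\|\E g\|_{L^p(T_R)}^p\bigr),
\end{align*}
then supplies the factor $2^{1/p}$. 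The natural candidate for $g$ is a frequency shift $g(\xi)=f(\xi-\eta)$: the identity $Q(\zeta+\eta)=Q(\zeta)+2B(\zeta,\eta)+Q(\eta)$ (with $B$ the symmetric bilinear form of $Q$ and $A$ its matrix) yields
\begin{align*}
	\E g(x,t)=e^{i(x\cdot\eta+tQ(\eta))}\,\E f(x+2tA\eta,\,t),
\end{align*}
a pointwise modulation composed with an $x$-shear, so that $\|\E g\|_{L^p(T_R)}=\|\E f\|_{L^p(T_R)}$ on every horizontal plate.

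The main obstacle is geometric: an arbitrary carrier of measure $M\leq 1/2$ need not admit a translation $\eta$ for which $\supp f+\eta$ lies inside $\Omega$ \emph{and} is disjoint from $\supp f$. I would resolve this by averaging over admissible $\eta$: the mean overlap $|(\supp f+\eta)\cap \supp f|$ is of order $M^2$, so for most $\eta$ one can form $\tilde f$ of mass $(2-O(M))M$ with Clarkson-gain $2^{1/p}(1-O(M))$; the losses form a geometric series in the mass-scales and aggregate to a universal constant absorbed into $C_1$. The adaptation needed to transfer this translation-based construction to a general compact quadric (where the sphere's rotational symmetries are unavailable and affine scaling fails on compact parts of the paraboloid) is the content of the invariance argument in Section~\ref{sec:invar}.
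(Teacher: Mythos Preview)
Your overall strategy---translate $f$ in frequency, apply Clarkson, pick the better sign, iterate until the mass is of order one---is exactly the mechanism of Lemma~\ref{keylem1} and Lemma~\ref{iter}. The gap is in the formulation of your doubling lemma. You claim a \emph{multiplicative} loss, i.e.\ $\|\E\tilde f\|_{L^p(T_R)}\geq 2^{1/p}(1-O(M))\|\E f\|_{L^p(T_R)}$ for \emph{every} $f\in D$. What Clarkson actually gives, after removing the overlap $E_\cap=\supp f\cap(\supp f+\eta)$ so that $\tilde f\in D$, is
\[
\|\E\tilde f\|_{L^p(T_R)}\ \geq\ 2^{1/p}\|\E f\|_{L^p(T_R)}\ -\ \|\E(f_\eta\chi_{E_\cap})\|_{L^p(T_R)},
\]
an \emph{additive} error. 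Knowing that $|E_\cap|\lesssim M^2/\delta^n$ controls the \emph{mass} of $f_\eta\chi_{E_\cap}$, not its extension norm relative to $\|\E f\|$; for an arbitrary $f\in D$ the quantity $\|\E f\|_{L^p(T_R)}$ can be as small as you like, so there is no reason for the error to be $O(M)\|\E f\|$. In your unwinding $\|\E f_0\|\leq 2^{-k/p}\C_\infty(R)+\sum_j 2^{-(j+1)/p}\text{error}_j$, the only bound available for the errors without further input is $\text{error}_j\leq \C_\infty(R)$, and the sum is then $O(\C_\infty(R))$ with no factor $M_0^{1/p}$---the trivial bound.

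The paper closes this by a bootstrap that you do not mention: since $f_\eta\chi_{E_\cap}\in D$, one has $\|\E(f_\eta\chi_{E_\cap})\|_{L^p(T_{2R})}\leq 2^{1/p}\C_q(R)|E_\cap|^{1/q}$, i.e.\ the error is controlled by the very constant $\C_q(R)$ one is trying to bound. This is useful precisely when $f$ is a \emph{near-extremiser} for $\C_q(R)$, because then $\|\E f\|\approx\C_q(R)M^{1/q}$ and the error is genuinely small relative to it. Thus one does not attempt your pointwise-in-mass bound for every $f$; one starts from a near-extremiser of $\C_q(R)$, shows the doubled function is again a near-extremiser (Lemma~\ref{keylem}), iterates until the mass exceeds a fixed $\gamma>0$ (Lemma~\ref{iter}), and only then compares with $\C_\infty$. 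Your target inequality $\|\E f\|\lesssim\mass(f)^{1/p}\C_\infty(R)$ for all $f\in D$ is true, but it is equivalent to $\C_p(R)\lesssim\C_\infty(R)$ and is a \emph{consequence} of the theorem, not a route to it along the lines you sketch.

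A second point you defer: after translating by $\eta\in[0,\delta]^n$ the carrier lies only in $(1+\delta)\Omega$, not in $\Omega$. The paper rescales back, which replaces $T_R$ by $T_{(1+\delta)^2R}$, and then one must choose the $\delta_j$ along the iteration so that both $\prod_j(1+\delta_j)^2$ and $\sum_j(|E_j|/\delta_j^n)^{1/q}$ stay bounded. This requires $\delta_j$ to depend on $|E_j|$ (the paper takes $\delta_j^n=\beta^{-1}|E_j|^{1/2}$); your ``$O(M)$'' accounting would give losses $\sum_j M_j\approx 1$, which does not sum to something small.
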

For convenience, we formulate a version that is more along the lines of the usual language in restriction theory:
\begin{cor}
Let $p_0>(2n+2)/n$. Assume we have the restriction estimate
$$\|\E g\|_{L^{p_0}(\R^{n+1})}\leq C \|g\|_\infty$$
for all $g\in D\subset L^\infty(S)$. Then for all $p>p_0$, and $g\in L^p(S)$,
$$\|\E g\|_{L^p(\R^{n+1})}\leq C' \|g\|_p.$$
\end{cor}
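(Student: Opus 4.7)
The plan is to apply Theorem \ref{mainthm} together with spacetime interpolation against the trivial $L^\infty$ bound in order to derive a restricted strong-type $(p,p)$ estimate for $g \in D$, and then to upgrade this to a full $L^p \to L^p$ estimate by real interpolation at two neighbouring exponents.

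First, since $|\Omega|=1$ we have the pointwise trivial bound $\|\E g\|_\infty \leq \|g\|_1 \leq \|g\|_\infty$. Combining this with the hypothesis through $|\E g|^r \leq \|\E g\|_\infty^{r-p_0}|\E g|^{p_0}$ and integrating globally yields
$$\|\E g\|_{L^r(T_R)} \leq C^{p_0/r}\|g\|_\infty$$
uniformly in $R$ for every $r > p_0$, i.e.\ $\C_\infty^{(r)}(R)\lesssim 1$. Now fix $p > p_0$ and choose any $r \in (p_0, p)$. Theorem \ref{mainthm} applied with spacetime exponent $r$ and surface exponent $p$ (admissible since $p > r$) gives $\C_p^{(r)}(R) \leq C_1 \C_\infty^{(r)}(R)\lesssim 1$ uniformly in $R$; letting $R\to\infty$ by monotone convergence, $\|\E g\|_{L^r(\R^{n+1})} \leq C'\|g\|_p$ for every $g \in D$. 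A further spacetime interpolation against the trivial bound $\|\E g\|_\infty \leq \|g\|_1 \leq \|g\|_p$ (again using $|\Omega|=1$) then yields
$$\|\E g\|_p \leq \|\E g\|_\infty^{1-r/p}\,\|\E g\|_r^{r/p} \leq C''\|g\|_p, \qquad g\in D,$$
which, specialising to $g=\chi_E$, is exactly the restricted strong-type bound $\|\E\chi_E\|_p \lesssim |E|^{1/p}$.

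The remaining step --- and in my view the main obstacle --- is to go from this restricted estimate to a strong-type estimate on all of $L^p(S)$. A single restricted strong-type $(p,p)$ bound, extended by layer cake, only delivers the Lorentz estimate $\E:L^{p,1}(S)\to L^p(\R^{n+1})$, which is strictly weaker than $L^p\to L^p$. To close the gap, observe that the above argument applies verbatim to every exponent $s > p_0$, hence $\E:L^{s,1}\to L^s$ holds for all such $s$. Choosing $s_0,s_1$ with $p_0<s_0<p<s_1$ and $\theta\in(0,1)$ satisfying $1/p=(1-\theta)/s_0+\theta/s_1$, Marcinkiewicz real interpolation for Lorentz spaces --- using the standard identities $(L^{s_0,1}, L^{s_1,1})_{\theta,p}=L^p$ and $(L^{s_0},L^{s_1})_{\theta,p}=L^p$ --- produces the desired strong-type conclusion $\E:L^p(S)\to L^p(\R^{n+1})$.
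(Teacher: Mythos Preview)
Your proof is correct and follows essentially the same route as the paper: invoke Theorem~\ref{mainthm} to turn the $L^\infty$ hypothesis into a restricted strong-type bound, then interpolate. The only cosmetic difference is in the order of operations. The paper applies Theorem~\ref{mainthm} directly at spacetime exponent $p_0$ to obtain $\|\E\chi_E\|_{L^{p_0}}\lesssim |E|^{1/p}$ and then invokes a single ``Marcinkiewicz interpolation with the trivial $L^1\to L^\infty$ bound''; you instead first lift the spacetime exponent from $p_0$ to $r\in(p_0,p)$ via the trivial $L^\infty$ bound, apply Theorem~\ref{mainthm} at spacetime $r$, lift once more to spacetime $p$, and then --- more carefully than the paper --- spell out the passage from restricted strong-type $(s,s)$ for all $s>p_0$ to genuine strong-type $(p,p)$ by real interpolation of the pair $L^{s_0,1}\to L^{s_0}$ and $L^{s_1,1}\to L^{s_1}$. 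Your extra care here is well placed: the paper's one-line appeal to Marcinkiewicz with the $(1,\infty)$ endpoint, read literally, only reaches the diagonal for $p_0<p<p_0+1$ and tacitly needs a further interpolation (e.g.\ with the trivial $L^\infty\to L^\infty$ bound) for larger $p$.
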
    

\begin{proof}
The assumptions of the corollary imply in particular
$$\C_\infty(T_R) \leq C$$
for all $R>1$.
But then $$\C_p(T_R) \leq CC_1^;$$
for all $p>p_0$, and since $D$ contains all characteristic functions $\chi_E$ of measurable sets $E$, we obtain for $R\to\infty$
$$\|\E \chi_E\|_{L^{p_0}(\R^{n+1})}\leq CC_1|E|^{1/p}.$$
Marcinkiewicz interpolation with the trivial $L^1(S)\to L^\infty(\R^{n+1})$ result gives the claim.
\end{proof}

\begin{remark}
Though we discuss the proof in the setting of our quadrics, an analogous argument applies for the sphere, giving an alternative proof for the factorisation argument for Fourier restriction to the sphere. Actually, for the sphere, our argument can be simplified compared to the case for quadrics. Since Lemma \ref{rotorgin} does not require the extra parameter $\delta$ from Lemma \ref{rot}, there is no need for the rescaling argument.
\end{remark}

\section{Further remarks and Preliminaries}

\subsection{Limitations of factorisation arguments in restriction theory}
The proof we present works for any quadratic phase function, regardless of its signature. Perturbations are a different matter. Although some kind of factorisation theorem might be conjectured to hold for perturbations of quadratic forms, i.e., compact surfaces with non-vanishing curvature, a proof would require further ideas, as they are not affine invariant.\\
Going beyond surfaces with non-vanishing curvature however, factorisation may fail. Müller, Vargas and the author studied restriction estimates for surfaces of finite type, the prototypical example being the graph $S_{m,l}$ of the phase function $\psi_{m,l}(x,y)=x^m+y^l$, $x,y\in[0,1]^2$. The range of admissible exponents can be described in terms of the so-called \emph{height} $h$, in this case given by $\frac{1}{h}=\frac{1}{m}+\frac{1}{l}$ and $M=\max\{m,l\}$.
The following holds true \cite{bmv}:

\begin{thm}[Buschenhenke, Müller, Vargas]
	Let $p>\frac{10}{3}$. Then the extension operator associated to 
	$\psi_{m,l}$ is bounded from $L^q(S_{m,l})$ to $L^p(\R^{n+1})$ if
	and only if $p>h+1$, $\frac{1}{q'}\geq\frac{h+1}{p}$ and 
	$\frac{1}{q}+\frac{2M+1}{p}<\frac{M+2}{2}$.
\end{thm}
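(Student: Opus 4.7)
The plan is to split the theorem into two parts: necessity of the three conditions via Knapp-type tests, and sufficiency via a bilinear method adapted to the anisotropic scaling of $\psi_{m,l}$.

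For \emph{necessity}, I would deploy two test functions. The basic Knapp test is $f = \chi_{R_\delta}$ with $R_\delta = [0,\delta^{1/m}] \times [0,\delta^{1/l}]$, on which $\psi_{m,l}$ varies by at most $O(\delta)$. Then $|\E f|$ is essentially $|R_\delta| = \delta^{1/h}$ on a dual slab of size $\delta^{-1/m} \times \delta^{-1/l} \times \delta^{-1}$; balancing $\|\E f\|_p$ against $\|f\|_q$ as $\delta \to 0$ produces both $p > h+1$ and $\frac{1}{q'} \geq \frac{h+1}{p}$. The third, more subtle condition $\frac{1}{q} + \frac{2M+1}{p} < \frac{M+2}{2}$ would come from concentrating on the more degenerate coordinate axis (WLOG the one corresponding to the exponent $M$): take a thin slab $[0,\delta^{1/M}] \times [0,\eta]$ with $\eta$ chosen so that the Fourier content is essentially one-dimensional. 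The surface then factors locally as a curve of finite type $M$ times a nearly trivial direction, and the resulting one-dimensional Knapp obstruction yields exactly this line after optimising in $\delta$ and $\eta$.

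For \emph{sufficiency}, I would employ the Tao--Vargas--Vega bilinear framework together with an anisotropic dyadic decomposition $[0,1]^2 = \bigcup_{j,k} R_{j,k}$, where $R_{j,k}$ has sides $\approx 2^{-j/m} \times 2^{-k/l}$. On each such piece, an affine rescaling (including a vertical shear absorbing the lower-order Taylor terms at the corner) normalises $\psi_{m,l}$ to a perturbation of the standard paraboloid on the unit square, with a tracked Jacobian. Apply a sharp bilinear extension estimate for transversal pairs of such renormalised pieces; such an estimate is available for $p > 10/3$ by the Wolff--Tao-level results in $\R^3$. Convert to a linear estimate via Tao's bilinear-to-linear passage, then sum over $(j,k)$ tracking the rescaling Jacobians. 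The three conditions of the theorem correspond precisely to convergence of the three geometric series arising from summation in $j$, in $k$, and along the diagonal $j/m \sim k/l$.

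The \textbf{main obstacle} will be sharpness at the boundary of the stated range: contributions from pieces near the degenerate axes accumulate marginally, and a naive dyadic sum yields only a logarithmic loss at the endpoint. Avoiding this loss requires either an $\epsilon$-removal step combined with interpolation against a slightly subcritical exponent, or a refined square-function argument exploiting Fourier-support orthogonality across the strips of maximal degeneracy. A second delicate point is that the bilinear input must survive the anisotropic rescaling with its transversality gain intact; this is exactly where the hypothesis $p > 10/3$ enters, since it provides enough slack in the bilinear exponent to absorb the Jacobian blow-up without destroying the transversality-based gain at the diagonal.
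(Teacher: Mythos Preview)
This theorem is not proved in the present paper. It is quoted from \cite{bmv} (Buschenhenke, M\"uller, Vargas, \emph{Analysis and PDE} 2017) solely to illustrate that factorisation can fail for finite-type surfaces; the paper then moves on to Example~2.2 without supplying any argument. There is therefore no ``paper's own proof'' to compare your proposal against.

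That said, your sketch is broadly aligned with the strategy of the cited work: necessity via Knapp-type tests adapted to the anisotropic scaling, and sufficiency via bilinear estimates plus an anisotropic dyadic decomposition with rescaling to model paraboloids. A few cautions if you intend to flesh this out. For the third necessary condition you describe a one-dimensional reduction along the degenerate axis; in \cite{bmv} the relevant obstruction is obtained from a more delicate two-parameter Knapp construction, and your heuristic of ``optimising in $\delta$ and $\eta$'' hides the actual mechanism that produces the exponent $\frac{M+2}{2}$. On the sufficiency side, the passage ``apply a sharp bilinear extension estimate for transversal pairs of such renormalised pieces'' is the crux and is far from routine: after anisotropic rescaling the surfaces are genuine \emph{perturbations} of paraboloids, not paraboloids, and the bilinear machinery of Tao and Wolff does not apply off the shelf. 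Establishing the needed bilinear estimate with uniform control in the perturbation is the main technical content of \cite{bmv} and occupies the bulk of that paper; your outline treats it as a black box. Finally, the claim that the three conditions correspond ``precisely to convergence of the three geometric series'' is an oversimplification---the interaction between the dyadic scales and the bilinear-to-linear step is more intricate than a straight geometric sum.
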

The condition $p>\frac{10}{3}$ is not sharp, but the natural limit for the bilinear method used in \cite{bmv}. We emphasize that the condition $\frac{1}{q}+\frac{2M+1}{p}<\frac{M+2}{2}$ does not allow a factorisation theorem.
We give a concrete example where indeed \eqref{pinf} holds for certain values of $p$, but \eqref{pp} does not:

\begin{example}
Let $l=2$ and $m=M=7$. 
We see that we have boundedness from $L^\infty(S_{m,l})$ to $L^p(\R^{n+1})$ for all $p>\frac{10}{3}$; however, we have no boundedness from $L^p(S_{m,l})$ to $L^p(\R^{n+1})$ unless even $p>\frac{32}{9}>\frac{10}{3}:$\\
Here, the height is $h=\frac{14}{9}$, so we have $p>\frac{10}{3}>h+1$ in any case.
The critical condition is $\frac{1}{q}+\frac{15}{p}<\frac{9}{2}$, which for $q=\infty$ gives $p>\frac{10}{3}$, but for $q=p$, necessarily $p>\frac{32}{9}$.\\
\end{example}

\subsection{Invariance considerations}\label{sec:invar}
The factorisation argument in \cite{BoG} relies on the rotation invariance of the sphere. This rotation invariance can be exploited in the following lemma:
\begin{lemma}\label{rotorgin}
Let $E,F\subset\S^{n-1}$ measurable. Then there exist a rotation $\rho\in\mathcal{O}(n)$ such that 
$$ \sigma(E\cap\rho(F))= \sigma(E)\,\sigma(F).$$
\end{lemma}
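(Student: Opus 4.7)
The plan is to view $\sigma(E\cap\rho(F))$ as a continuous function on the compact connected group $SO(n)$, compute its average against normalised Haar measure, and then apply the intermediate value theorem. Normalise the surface measure so that $\sigma(\mathbb{S}^{n-1})=1$, and assume $n\geq 2$ (the statement already presupposes this, otherwise it fails for trivial cardinality reasons).

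First I would define $\Phi:SO(n)\to[0,1]$ by $\Phi(\rho):=\sigma(E\cap\rho(F))$ and establish continuity. This is a standard approximation: for continuous $f,g$ on $\mathbb{S}^{n-1}$, the map $\rho\mapsto\int f(\xi)g(\rho^{-1}\xi)\,\dd\sigma(\xi)$ is continuous by dominated convergence together with uniform continuity of $g$ on the compact sphere. Then approximate $\chi_E$ and $\chi_F$ in $L^2(\sigma)$ by continuous functions and use rotational invariance of $\sigma$ to transfer the approximation across $\rho$; Cauchy--Schwarz gives uniform control, hence $\Phi$ is continuous on $SO(n)$.

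Next I would compute the Haar average of $\Phi$. Writing $\dd\rho$ for normalised Haar measure on $SO(n)$ and applying Fubini,
\begin{align*}
\int_{SO(n)}\Phi(\rho)\,\dd\rho
&=\int_{\mathbb{S}^{n-1}}\chi_E(\xi)\left(\int_{SO(n)}\chi_F(\rho^{-1}\xi)\,\dd\rho\right)\dd\sigma(\xi).
\end{align*}
For each fixed $\xi\in\mathbb{S}^{n-1}$, the map $\rho\mapsto\rho^{-1}\xi$ pushes Haar measure forward to the unique $SO(n)$-invariant probability measure on the orbit $\mathbb{S}^{n-1}$, namely $\sigma$ itself, so the inner integral equals $\sigma(F)$. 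Thus $\int_{SO(n)}\Phi\,\dd\rho=\sigma(E)\sigma(F)$.

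Finally, since $SO(n)$ is path-connected for $n\geq 2$ and $\Phi$ is continuous, its image is a connected subset of $\mathbb{R}$ containing a value $\geq\sigma(E)\sigma(F)$ and a value $\leq\sigma(E)\sigma(F)$ (the average sits in between, by any sign of variation). Hence $\sigma(E)\sigma(F)$ lies in the image, so there is some $\rho\in SO(n)\subset\mathcal{O}(n)$ achieving $\Phi(\rho)=\sigma(E)\sigma(F)$.

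The only genuinely delicate point is the continuity of $\Phi$ for merely measurable sets $E,F$; everything else is Fubini, Haar invariance, and the intermediate value theorem. I would treat the continuity via an $L^2$-density argument as above, which is the cleanest way to avoid pointwise pathologies of $\chi_{\rho(F)}$ in $\rho$.
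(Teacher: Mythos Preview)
Your proof is correct. The paper does not actually give a proof of this lemma; it explicitly leaves it to the reader, pointing to the ``very similar'' proof of Lemma~\ref{rot} (where one integrates $|E\cap(F+y)|$ over translations and pigeonholes to get an inequality). Your argument is precisely the natural sphere analogue: compute the average of $\Phi(\rho)=\sigma(E\cap\rho(F))$ over $SO(n)$ via Fubini and transitivity of the action to get $\sigma(E)\sigma(F)$, and then---since here one wants equality rather than an inequality---invoke continuity of $\Phi$ and connectedness of $SO(n)$ so that the average lies in the (closed interval) image. This is exactly what the paper has in mind; the only extra ingredient beyond the Lemma~\ref{rot} template is the continuity/IVT step, which you handle cleanly by $L^2$-approximation (equivalently, strong continuity of the regular representation).
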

Since we will not directly apply this lemma, but use a variation adapted to our quadric, namely Lemma \ref{rot}, we will leave the (very similar) proof to the reader. \\

Our quadric does not come with such rotation invariance; instead, we will make use of translations in parameter space, amounting to affine transformations of the surface. The obvious problem is that while $\R^n$ is translation-invariant, the compact subset $\Omega\subset \R^n$ is not. The following Lemma will meet our purposes:

\begin{lemma}\label{rot}
Let $\delta>0$ and $E,F\subset\R^n$ measurable. Then there exist a $\lambda\in[0,\delta]^n$ such that 
$$ |E\cap(F+\lambda)|\leq \delta^{-n}|E|\,|F|.$$
\end{lemma}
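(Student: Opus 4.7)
The plan is to argue by averaging over the parameter $\lambda$. Concretely, I would consider the average
\begin{equation*}
	\frac{1}{\delta^n}\int_{[0,\delta]^n} |E\cap(F+\lambda)|\,\dd\lambda
\end{equation*}
and show it is bounded by $\delta^{-n}|E|\,|F|$. Once this is done, the existence of some $\lambda\in[0,\delta]^n$ for which $|E\cap(F+\lambda)|$ does not exceed the average (and hence does not exceed $\delta^{-n}|E|\,|F|$) is immediate.

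The main computation uses Fubini. Writing $|E\cap(F+\lambda)|=\int_{\R^n}\chi_E(x)\chi_F(x-\lambda)\,\dd x$ and swapping the order of integration gives
\begin{equation*}
	\frac{1}{\delta^n}\int_{[0,\delta]^n}|E\cap(F+\lambda)|\,\dd\lambda
	= \frac{1}{\delta^n}\int_{\R^n}\chi_E(x)\left(\int_{[0,\delta]^n}\chi_F(x-\lambda)\,\dd\lambda\right)\dd x.
\end{equation*}
For each fixed $x$, the inner integral equals $|F\cap(x-[0,\delta]^n)|\leq|F|$, so the whole expression is bounded by $\delta^{-n}|E|\,|F|$, which finishes the proof.

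There is essentially no obstacle here — the statement is a textbook Fubini/averaging trick, and no finiteness assumption on $E$ or $F$ is needed because the estimate is vacuous when either has infinite measure (and the argument is valid regardless). The only tiny point of care is the direction of the translation: one should verify that $\int_{[0,\delta]^n}\chi_F(x-\lambda)\,\dd\lambda\leq |F|$, which follows from the translation-invariance of Lebesgue measure applied to the set $\{\lambda\in[0,\delta]^n:x-\lambda\in F\}=x-(F\cap(x-[0,\delta]^n))$. This is the analogue for translations of the corresponding rotational averaging that would be used for Lemma \ref{rotorgin}, where one averages over $\rho\in\mathcal{O}(n)$ instead, and the factor $\delta^{-n}$ here plays the role of the measure of the rotation group.
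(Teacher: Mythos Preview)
Your proof is correct and follows essentially the same averaging/Fubini argument as the paper: the paper computes $\int_{[0,\delta]^n}|E\cap(F+y)|\,\dd y \leq \int_{\R^n}|E\cap(F+y)|\,\dd y = |E|\,|F|$ and then pigeonholes, which is exactly your averaging step with the inner integral bounded slightly differently (over all of $\R^n$ rather than pointwise by $|F|$). The two presentations are equivalent.
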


\begin{remark} We will apply the lemma for $E=F\subset \Omega$ with $|E|=|F|\ll 1$. Observe that in general $F+\lambda$ is not a subset of $\Omega$ anymore, but we only know that $F+\lambda$ is contained in $(1+\delta)\Omega$, as is $E$. We will deal with these issue by rescaling. Furthermore, we have to account for the parameter $\delta$ and accommodate a blow-up by $\delta^{-n}\gg1$ on the right-hand-side. This has to be balanced against the smallness of $|E|=|F|\ll 1$. We will give more details later in the proofs.
\end{remark}

\begin{proof} We have
\begin{align}
	|E|\,|F|=&\int_{\R^n}\int_{\R^n}\chi_E(x)\chi_F(x-y)\dd x\dd y\\
	=&\int_{\R^n}|E\cap(F+y)|\dd y \geq \int_{[0,\delta]^n} |E\cap(F+y)|\dd y.
\end{align}
By pigeonholing, there exist a $\lambda\in[0,\delta]^n$ such that 
$$|E|\,|F| \geq \delta^n |E\cap(F+\lambda)|,$$
as we claimed.
\end{proof}

\subsection{Clarkson's inequality}
Another tool we are going to use is the so-called Clarkson's inequality \cite{Cl}. 
\begin{lemma}\label{clark}
Let $(X,\mu)$ be a measure space, and $2\leq p<\infty$. Then for all $F,G\in L^p(X)$, we have for $p<\infty$
\begin{align}
	\|F\|_{L^p(X)}^p + \|G\|_{L^p(X)}^p  \leq \frac{\|F+G\|_{L^p(X)}^p + \|F-G\|_{L^p(X)}^p}{2}. 
\end{align}
\end{lemma}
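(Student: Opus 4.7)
The plan is to prove the inequality pointwise and then integrate. Once we have
$$|a|^p + |b|^p \leq \frac{|a+b|^p + |a-b|^p}{2}$$
for all $a,b\in\CC$ and $p\geq 2$, applying this to $a=F(x)$, $b=G(x)$ and integrating against $\mu$ immediately yields the $L^p(X)$ version.

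To establish the pointwise inequality, I would first invoke the parallelogram identity
$$|a+b|^2 + |a-b|^2 = 2(|a|^2 + |b|^2),$$
which is valid for any $a,b\in\CC$. Since $p\geq 2$, the map $t\mapsto t^{p/2}$ is convex on $[0,\infty)$, so Jensen's inequality applied to the two values $|a+b|^2$ and $|a-b|^2$ gives
$$\left(\frac{|a+b|^2 + |a-b|^2}{2}\right)^{p/2} \leq \frac{|a+b|^p + |a-b|^p}{2}.$$
Substituting the parallelogram identity into the left-hand side yields
$$(|a|^2 + |b|^2)^{p/2} \leq \frac{|a+b|^p + |a-b|^p}{2}.$$

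The final ingredient is the elementary superadditivity bound
$$|a|^p + |b|^p \leq (|a|^2 + |b|^2)^{p/2},$$
which is the inequality $(x+y)^{\alpha}\geq x^{\alpha}+y^{\alpha}$ for $x,y\geq 0$ and $\alpha=p/2\geq 1$ applied to $x=|a|^2$, $y=|b|^2$; this follows from the mean value theorem (or from noticing that $(x+y)^{\alpha}-x^{\alpha}\geq \alpha y\,x^{\alpha-1}\geq y^{\alpha}$ under the WLOG assumption $x\geq y$). Chaining this with the previous display gives the desired pointwise bound.

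I do not expect any genuine obstacle here: the argument relies only on the parallelogram identity, convexity of a single power function, and a one-line real-variable inequality. The delicate part of the full Clarkson theorem is the complementary estimate valid in the range $1<p<2$, which requires a different and more subtle argument, but that version is not needed in this paper.
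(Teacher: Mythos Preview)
Your proof is correct and is essentially the same as the paper's: both establish the pointwise inequality from the parallelogram identity together with two power inequalities, then integrate. The paper phrases those two inequalities as the embeddings $\ell^p\hookrightarrow\ell^2$ for counting measure and $\ell^2\hookrightarrow\ell^p$ for probability measure (which are exactly your superadditivity and Jensen steps, respectively), but the content is identical.
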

\begin{remark} There is a analogue version for the case $p=\infty$, and Clarkson has a similar, though slightly more complicated formula for $p<2$, but since we are only interested in the case $2\leq p<\infty$, we refer the reader to \cite{Cl} or \cite{lt} for further information.
\end{remark}
For sake of completeness, we sketch the short proof.
\begin{proof}
We begin by taking the root of the well-known parallelogram identity: For $z,w\in\CC$ we have
\begin{align}
	(|z|^2+|w|^2)^\frac12
	= \left(\frac{|z+w|^2+|z-w|^2}{2}\right)^\frac12.
\end{align}
Since $p\geq 2$, we can use the embeddings of $\ell^p$-spaces, for the left-hand side with the counting measure, for the right-hand side with probability measure to conclude that
\begin{align*}
	(|z|^p+|w|^p)^\frac1p
	\leq \left(\frac{|z+w|^p+|z-w|^p}{2}\right)^\frac1p.
\end{align*}
Application to $z=F(x)$ and $w=G(x)$ gives
\begin{align*}
	\|F\|_{L^p(X)}^p + \|G\|_{L^p(X)}^p
	=& \int_X |F(x)|^p+|G(x)|^p d\mu(x)	\\
	\leq& \frac12 \int_X |F(x)+G(x)|^p+|F(x)-G(x)|^p d\mu(x)	\\
	=& \frac12 \|F+G\|_{L^p(X)}^p +\frac12 \|F-G\|_{L^p(X)}^p,
\end{align*}
which is what we claimed.
\end{proof}

\medskip


\section{Induction on scales}

\subsection{Near extremisers}
Given an error $\e>0$, we will call a function $f\in D$ an $\e$-near extremiser for $\C_q(R)$, if
\begin{align}
	\|\E f\|_{L^p(T_R)}\geq(1-\e) \C_q(R) \|f\|_{L^q(S)}. 
\end{align}
Clearly, $\e$-near extremisers always exist.

Note that if $f\in D$ is such an $\e$-near extremiser for $\C_q$, then for any $1\leq \tilde q\leq\infty$, we have
$$(1-\e) \C_q(R) \|f\|_{L^q(S)}\leq \|\E f\|_{L^p(T_R)} \leq \C_{\tilde q}(R) \|f\|_{L^{\tilde q}(S)}.$$
If there would be an $\e$-near extremiser $f$ for $\C_q(R)$ with $|\supp f|=1$, then 
$\C_q(R)$ and $\C_{\tilde q}(R)$ would be comparable up to a factor $(1-\e)$.
Thus our aim will be to construct near extremisers with large mass $|\supp f|$. 

Furthermore, for $\e>0,\alpha\geq1$, we will call $f\in D$ a $(\e,\alpha)$-near extremiser for $\C_q(R)$, if
\begin{align}
	\|\E f\|_{L^p(T_{\alpha R})}\geq(1-\e) \C_q(R) \|f\|_{L^q(S)}. 
\end{align}
Clearly, any $\e$-near extremiser is a $(\e,\alpha)$-near extremiser.

%
%
%
%

Our key result is the following lemma, which essentially allows us to increase the mass of a near extremiser:

\begin{lemma}\label{keylem1}
Let $2\leq p,q\leq\infty$, $\delta>0$, $0<\kappa<1<\alpha<2 $. Assume that $f\in D$ is a $(1-\kappa,\alpha)$-near extremiser for $\C_q(R)$, that is
\begin{align}
	\|\E f\|_{L^p(T_{\alpha R})}\geq\kappa \C_q(R) \|f\|_{L^q(S)}.
\end{align}
Then there exists a $ f_1\in D$ such that
\begin{align}
	2\,\mass(f) \geq \mass(f_1) \geq 2\,\mass(f)
	\left(1-\frac{\mass(f)}{2\delta^n}\right)	\frac{1}{(1+\delta)^n}
\end{align}
and
\begin{align}
	\|\E f_1\|_{L^p(T_{(1+\delta)^2\alpha R})}
	\geq 2^{\frac{1}{p}-\frac1q}(1+\delta)^{\frac{n+2}{p}-\frac{n}{q'}}\left(\kappa-\left[\frac{\mass(f)}{\delta^n}\right]^\frac1q\right) \C_q(R) \| f_1\|_{L^q(S)}.
\end{align}
\end{lemma}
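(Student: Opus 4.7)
The strategy is to double the mass of $f$ by combining it with a parameter-space translate $g$ whose support is nearly disjoint from $\supp f$. Since the quadric is not affinely invariant, the translate lives on the slightly larger base $[0,1+\delta]^n$, and one rescales back to $\Omega$ at the end, producing the factors $(1+\delta)^{(n+2)/p-n/q'}$ and $(1+\delta)^{-n}$ in the conclusion. The nontrivial ingredients are the translation invariance of the quadratic phase (playing the role of rotation invariance of the sphere in the classical argument) and Clarkson's inequality for choosing the right sign.

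First, apply Lemma \ref{rot} with $E = F = \supp f$ and parameter $\delta$ to obtain $\lambda \in [0,\delta]^n$ with $|A| \leq \delta^{-n}\mass(f)^2$, where $A := \supp f \cap (\supp f + \lambda)$. Set $g(\xi) := f(\xi - \lambda)$, supported in $\Omega + \lambda \subset [0, 1+\delta]^n$. Expanding $Q(\xi + \lambda) = Q(\xi) + 2 L\lambda\cdot\xi + Q(\lambda)$ (with $L$ the symmetric matrix of $Q$) gives $|\E g(x,t)| = |\E f(x + 2tL\lambda, t)|$, whence $\|\E g\|_{L^p(T_R)} = \|\E f\|_{L^p(T_R)}$ for every $R$.

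Since $\|\E f\|_p = \|\E g\|_p$ on $T_{\alpha R}$, Clarkson's inequality (Lemma \ref{clark}) produces a sign $\epsilon \in \{+,-\}$ with $\|\E(f + \epsilon g)\|_{L^p(T_{\alpha R})}^p \geq 2\|\E f\|_{L^p(T_{\alpha R})}^p$. Define $h := f + \epsilon g\chi_{A^c}$, supported in $[0,1+\delta]^n$. On $A$ this keeps $f$, while off $A$ the supports of $f$ and $g$ are disjoint, so $f + \epsilon g$ already has unit modulus; hence $h \in D$ with $\mass(h) = 2\mass(f) - |A|$. Crucially, $A \subset \supp f \subset \Omega$, so $g\chi_A$ is itself a function on $\Omega$ and the defining property of $\C_q(\alpha R)$ together with Remark \ref{double} (applied once since $\alpha < 2$) gives $\|\E(g\chi_A)\|_{L^p(T_{\alpha R})} \leq \C_q(\alpha R)|A|^{1/q} \leq 2^{1/p}\C_q(R)|A|^{1/q}$. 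Writing $h = (f + \epsilon g) - \epsilon g\chi_A$ and combining the triangle inequality with the hypothesis and the bound on $|A|$ yields
$$\|\E h\|_{L^p(T_{\alpha R})} \geq 2^{1/p}\C_q(R)\mass(f)^{1/q}\bigl(\kappa - (\mass(f)/\delta^n)^{1/q}\bigr).$$

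Finally, rescale: $f_1(\xi) := h((1+\delta)\xi)$ lies in $D$ with support in $\Omega$ and $\mass(f_1) = \mass(h)/(1+\delta)^n$, yielding both announced mass bounds. A change of variables gives $\|\E f_1\|_{L^p(T_{(1+\delta)^2\alpha R})} = (1+\delta)^{(n+2)/p-n}\|\E h\|_{L^p(T_{\alpha R})}$, and substituting $\|f_1\|_{L^q(S)} = \mass(f_1)^{1/q} \leq (2\mass(f)/(1+\delta)^n)^{1/q}$ into the estimate above produces precisely the coefficient $2^{1/p-1/q}(1+\delta)^{(n+2)/p-n/q'}$. The principal technical point, and the main obstacle in the bookkeeping, is that Clarkson must be applied \emph{before} rescaling: this keeps the relevant plate at scale $\alpha R < 2R$, so that only a single use of the doubling property is needed, and the remaining $(1+\delta)$- and $2$-powers then combine cleanly into the claimed constant.
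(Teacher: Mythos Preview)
Your argument is correct and follows essentially the same route as the paper: translate $f$ by a $\lambda\in[0,\delta]^n$ chosen via Lemma~\ref{rot}, use the shear invariance of $T_{\alpha R}$ under the quadratic phase, apply Clarkson to select a sign, subtract the overlap error (bounded via Remark~\ref{double} since the overlap sits in $\Omega$), and rescale by $(1+\delta)^{-1}$ to return to $\Omega$. The only cosmetic difference is that you pick the sign immediately after Clarkson and then peel off the error term, whereas the paper keeps the $\ell^p$-average of both signs through the error estimate and pigeonholes at the end; the resulting bound is identical.
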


The notation clears up if we restrict ourselves to the regime $p<q$. Then for sufficiently small $\delta>0$, 
\begin{align}\label{fact}
	2^{\frac{1}{p}-\frac1q}(1+\delta)^{\frac{n+2}{p}-\frac{n}{q'}} 
	\geq 2^{\frac{1}{p}-\frac1q}(1+\delta)^{-n} \geq 1.
\end{align}

\begin{lemma}\label{keylem}
For $p< q\leq\infty$ exist a $\delta_0>$ so that for all $0<\delta<\delta_0$and $0<\kappa<1<\alpha<2$, the following holds true. Assume that $f\in D$ is a $(1-\kappa,\alpha)$-near extremiser for $\C_q(R)$, that is
\begin{align}
	\|\E f\|_{L^p(T_{\alpha R})}\geq\kappa \C_q(R) \|f\|_{L^q(S)}.
\end{align}
Then there exists an $f_1\in D$ such that 
\begin{align}
	2\,\mass(f) \geq \mass(f_1) \geq 2\,\mass(f)
	\left(1-\frac{\mass(f)}{2\delta^n}\right)	\frac{1}{(1+\delta)^n}
\end{align}
and
\begin{align}
	\|\E f_1\|_{L^p(T_{(1+\delta)^2\alpha R})}\geq\left(\kappa-\left[\frac{\mass(f)}{\delta^n}\right]^\frac1q\right) \C_q(R) \| f_1\|_{L^q(S)}.
\end{align}
\end{lemma}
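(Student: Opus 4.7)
The plan is to derive Lemma \ref{keylem} as an almost immediate consequence of Lemma \ref{keylem1}. Comparing the two statements, the hypotheses agree, the mass bound on $f_1$ agrees, and the plate $T_{(1+\delta)^2 \alpha R}$ on the left-hand side agrees; only the multiplicative prefactor in the $L^p$ estimate differs, by a factor of $2^{1/p-1/q}(1+\delta)^{(n+2)/p - n/q'}$. So I would first invoke Lemma \ref{keylem1} to extract a function $f_1 \in D$ satisfying the stated mass bound and the $L^p$ estimate with this extra prefactor, and then argue that for $\delta$ in some range $(0,\delta_0)$ depending only on $p$, $q$, and $n$, the prefactor is at least $1$, so that it can simply be dropped from the inequality.

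The key computation is the one already flagged in display \eqref{fact}. Writing $n/q' = n - n/q$, the exponent of $(1+\delta)$ becomes $(n+2)/p + n/q - n$, which is trivially bounded below by $-n$ because $(n+2)/p + n/q \geq 0$. Since $1 + \delta > 1$, monotonicity of the exponential gives $(1+\delta)^{(n+2)/p - n/q'} \geq (1+\delta)^{-n}$, and the problem reduces to ensuring $(1+\delta)^n \leq 2^{1/p - 1/q}$. The right-hand side is strictly greater than $1$ precisely because $p < q$, so I would set
\begin{align*}
  \delta_0 \;:=\; 2^{(1/p - 1/q)/n} - 1 \;>\; 0,
\end{align*}
which makes $2^{1/p-1/q}(1+\delta)^{(n+2)/p - n/q'} \geq 1$ for every $0 < \delta < \delta_0$. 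Combined with the output of Lemma \ref{keylem1}, this yields the desired inequality.

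I do not anticipate any genuine obstacle here: the substantive analytic content sits in Lemma \ref{keylem1}, while Lemma \ref{keylem} is essentially its clean reformulation in the factorisation-friendly regime $p<q$, where the strict gain $2^{1/p-1/q}>1$ (ultimately originating from Clarkson's inequality, Lemma \ref{clark}) is exactly what absorbs the polynomial loss $(1+\delta)^{-n}$ produced by the rescaling step that cushions the compact parameter domain $\Omega$ against translations in Lemma \ref{rot}.
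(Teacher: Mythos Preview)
Your proposal is correct and matches the paper's own argument essentially verbatim: the paper states that ``Lemma \ref{keylem} follows from Lemma \ref{keylem1}'' via the observation \eqref{fact} that for $p<q$ and sufficiently small $\delta$ one has $2^{1/p-1/q}(1+\delta)^{(n+2)/p-n/q'}\geq 2^{1/p-1/q}(1+\delta)^{-n}\geq 1$, which is precisely your computation (you additionally make the threshold $\delta_0=2^{(1/p-1/q)/n}-1$ explicit).
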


A number of comments are in order:\\
First of all, we use the notion of $(\e,\alpha)$-near extremisers, where we only assume a lower bound for the integral on the bigger plate $T_{rR}$ of thickness $2rR$, instead of the more natural $T_R$. This is because when passing to $f_1$, we have to increase the thickness by $(1+\delta)^2$ due to a rescaling. To allow an inductive application of the lemma, we make use of the parameter $\alpha>1$.\\
Secondly, one should think of $\delta$ being small, and take into account that the lemma is only effective if $\mass(f)\ll\delta^n$, so that $\kappa$ is shrinked not to much in the inductive process, while the mass of $f_1$ almost doubles compared to the mass of $f$.

\begin{figure}
\includegraphics[scale=0.5]{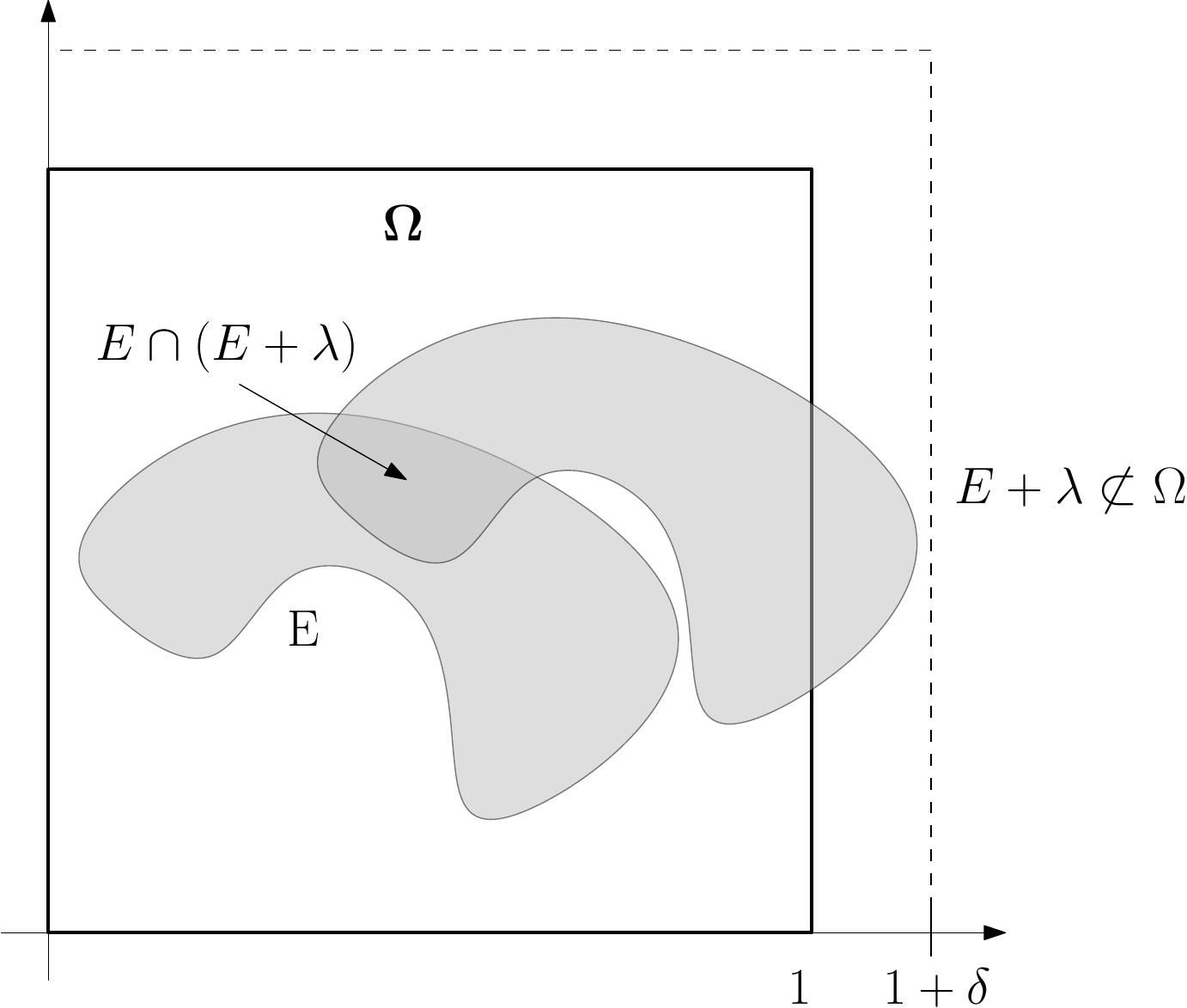}
\caption{Setup for the proof of Lemma \ref{keylem1}}
\end{figure}

Lemma \ref{keylem} follows from Lemma \ref{keylem1}, which we prove now.

\begin{proof}
Let denote $E:=\supp f$.
By Lemma \ref{rot}, we find some $\lambda\in[0,\delta]^n$ such that
for $E_\cap:=E\cap (\lambda+E)$ we have
\begin{align}
	|E_\cap|\leq \frac{|E|^2}{\delta^n}.
\end{align}
Now let $f_\lambda:=f(\cdot-\lambda)$ and
\begin{align}
	f_\pm:=f\pm f_\lambda(1-\chi_{E_\cap}).
\end{align}
Note that $f_\pm$ may not be supported in $\Omega$, but only in $(1+\delta)\Omega$ (cf. Fig. 2).
Observe that 
\begin{align}\label{unscsupp}
	|\supp f_\pm|= |E|+|(\lambda+E)\backslash E_\cap|
	\geq 2|E|-\frac{|E|^2}{\delta^n}
	\geq 2|E|\left(1-\frac{|E|}{2\delta^n}\right)
\end{align}
and
\begin{align}
	|\supp f_\pm| \leq  2|E|.
\end{align}
Furthermore, if $Q(\xi)=\xi^tA\xi$, then
\begin{align}
	|\E f_\lambda(x,t)|=|\int f(\xi-\lambda)e^{i(x\cdot\xi+tQ(\xi))}\dd\xi|
	=|\int f(\xi)e^{i([x+t(A+A^t)\lambda]\cdot\xi+tQ(\xi))}\dd\xi|
	=|\E f(x+t(A+A^t)\lambda,t)|,	
\end{align}
Since $T_{\alpha R}$ is invariant with respect to the map $(x,t)\to(x+t(A+A^t)\lambda,t)$, we see that 
$$\|\E f_\lambda\|_{L^p(T_{\alpha R})} = \|\E f\|_{L^p(T_{\alpha R})}.$$ 
Using the $(1-\kappa,\alpha)$-near extremiser property and Clarkson's inequality (Lemma \ref{clark}), we conclude that
\begin{align*}
	2^{\frac1p}\C_{q}(R)\kappa |E|^{\frac1q}
	\leq& 2^{\frac1p} \|\E f\|_{L^p(T_{\alpha R})}
	\\=& \left(\|\E f\|^p_{L^p(T_{\alpha R})}+\|\E f_\lambda\|^p_{L^p(T_{\alpha R})}\right)^\frac1p
	\\ \leq& \left(\frac12 \sum_\pm \|\E(f\pm f_\lambda)\|^p_{L^p(T_{\alpha R})}\right)^\frac1p . 
\end{align*}
Recalling that $f_\pm=f\pm f_\lambda(1-\chi_{E_\cap})$, we see that 
\begin{align*}
	2^{\frac1p}\C_{q}(R)\kappa |E|^{\frac1q}
	\leq& \left(\frac12 \sum_\pm \left[\|\E f_\pm\|_{L^p(T_{\alpha R})}+\|\E(f_\lambda\chi_{E_\cap})\|_{L^p(T_{\alpha R})}\right]^p\right)^\frac1p\\
	\leq
	& \left(\frac12 \sum_\pm \|\E f_\pm\|^p_{L^p(T_{\alpha R})}\right)^\frac1p
				+ \|\E(f_\lambda\chi_{E_\cap})\|_{L^p(T_{\alpha R})}.  
\end{align*}
Here, we used the triangle inquality for both the $L^p$-norm on $T_{\alpha R}$ and the outer $\ell_p$-norm.\\
We now bootstrap the error term $\|\E(f_\lambda\chi_{E_\cap})\|_{L^p(T_{\alpha R})}$. First observe that 
$$\supp(f_\lambda\chi_{E_\cap})\subset E_\cap\subset E\subset \Omega,$$
securing that $f_\lambda\chi_{E_\cap}\in D$. Therefore by Remark \ref{double}
\begin{align}
	\|\E(f_\lambda\chi_{E_\cap})\|_{L^p(T_{\alpha R})}
	\leq \|\E(f_\lambda\chi_{E_\cap})\|_{L^p(T_{2R})}
	\leq 2^{\frac1p}\C_q(R) |E_\cap|^{1/q}
	\leq 2^{\frac1p}\C_q(R)(\d^{-n}|E|^2)^{1/q},
\end{align}
and hence 
\begin{align}
	2^{\frac1p}\C_{q}(R) |E|^{\frac1q}(\kappa-\delta^{-n/q}|E|^{1/q})
	\leq \left(\frac12 \sum_\pm \|\E f_\pm \|^p_{L^p(T_{\alpha R})}\right)^\frac1p.
\end{align}
Since the left-hand side is a lower bound for the average of $\|\E f_\pm \|_{L^p(T_{\alpha R})}$ over the two signs $\pm$, it must be a lower bound for at least one of these terms.\\
Therefore, for either $\tilde f=f_+$ or $\tilde f=f_-$, we have
\begin{align}
	2^{\frac1p}\C_{q}(R) |E|^{\frac1q}(\kappa-\delta^{-n/q}|E|^{1/q})
	\leq \|\E \tilde f \|_{L^p(T_{\alpha R})},
\end{align}
and since $\|\tilde f\|_q \leq (2|E|)^{\frac1q}$, the claim follows by a scaling argument. Note that for $$f_1(\xi):=\tilde f((1+\delta)\xi),$$
while $\supp\tilde f\subset (1+\delta)\Omega$, we have 
$\supp f_1\subset\Omega$.

\end{proof}

We now proceed by iterating Lemma \ref{keylem}, increasing the mass of the near extremiser in every step:

\begin{lemma}\label{iter}
Let $p<q\leq\infty$.
For any $\e>0$, there exists a $\gamma>0$ such that for all $R>0$ the following holds:
If $f_0$ is a $\e/2$-extremiser for $\C_q(T)$ and $\mass f_0<\gamma$, then
there exists an $f\in D$ such that $\mass f\geq\gamma $ and $f$ is an $(\e,1+\e)$-near extremiser, that is,
\begin{align}
	\|\E(f)\|_{L^p(T_{(1+\e)R})}\geq (1-\e)\C_q(R) \|f\|_{L^q(S)}.
\end{align}
\end{lemma}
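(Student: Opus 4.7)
The proof iterates Lemma~\ref{keylem}. Fix small parameters $\delta = \delta(\epsilon,n,q) > 0$ and $\gamma = \gamma(\epsilon,\delta) > 0$ with $\gamma \ll \delta^n$, both to be specified. Starting from the given $f_0$, inductively define $f_{j+1}$ from $f_j$ by applying Lemma~\ref{keylem} with step parameter $\delta$, and halt at the first index $k$ with $\mass(f_k) \geq \gamma$; set $f := f_k$.

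Provided $\gamma \leq \delta^n/C$ for a large absolute constant $C$, the mass lower bound in Lemma~\ref{keylem} gives geometric growth
\[
  \mass(f_{j+1}) \geq \rho\,\mass(f_j), \qquad \rho := \frac{2(1-1/(2C))}{(1+\delta)^n} > 1,
\]
so the process terminates in $k \leq \log_\rho(\gamma/\mass(f_0)) + O(1)$ steps. At termination, the accumulated thickness factor is $\alpha_k = (1+\delta)^{2k}$ and the accumulated quality loss is
\[
  \kappa_0 - \kappa_k \leq \sum_{j=0}^{k-1}\left(\frac{\mass(f_j)}{\delta^n}\right)^{1/q} \lesssim \left(\frac{\gamma}{\delta^n}\right)^{1/q},
\]
\emph{uniformly in $k$}, since $\mass(f_j) \leq \mass(f_k)\,\rho^{-(k-j)}$ makes the resulting series geometric. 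Consequently $\gamma \leq \delta^n(c\epsilon)^q$ for an appropriate absolute $c > 0$ secures $\kappa_k \geq 1-\epsilon$, which together with $\mass(f_k) \geq \gamma$ gives the desired $(\epsilon,\alpha_k)$-near extremiser up to the thickness.

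The delicate point is the thickness bound $\alpha_k \leq 1+\epsilon$, equivalent to $k \leq K := \log(1+\epsilon)/(2\log(1+\delta)) \sim \epsilon/(2\delta)$. Combined with $k \leq \log_\rho(\gamma/\mass(f_0)) + O(1)$ this reduces to $\mass(f_0) \gtrsim \gamma\,\rho^{-K}$; since $\rho \approx 2$ for small $\delta$, the quantity $\rho^K$ grows like $2^{\epsilon/(2\delta)}$ and can be made arbitrarily large by shrinking $\delta$, so an appropriate choice of $\delta$ in terms of $\epsilon$ will accommodate every admissible starting mass. I expect this balancing of $\delta$ against very small $\mass(f_0)$ to be the principal technical obstacle: many iterations compound the factor $(1+\delta)^2$ in the thickness, so $\delta$ has to be chosen carefully and uniformly. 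Should the uniform choice prove awkward, a natural refinement is to let the step size $\delta_j$ depend on $\mass(f_j)$, trading each step's contribution to the thickness product against the ratios $\mass(f_j)/\delta_j^n$, reducing the analysis to summing two comparable geometric series.
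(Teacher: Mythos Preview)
Your main argument with a fixed step size $\delta$ has a genuine gap. The assertion that ``an appropriate choice of $\delta$ in terms of $\epsilon$ will accommodate every admissible starting mass'' is false: once $\epsilon$ is fixed, so are $\delta$ and $\gamma$, but the hypothesis allows $\mass(f_0)$ to be any positive number below $\gamma$. The number of iterations $k$ satisfies $k \gtrsim \log_\rho(\gamma/\mass(f_0))$, which is unbounded as $\mass(f_0)\to 0$, so for sufficiently small starting mass the thickness factor $(1+\delta)^{2k}$ exceeds $1+\epsilon$. No uniform $\delta=\delta(\epsilon)$ can absorb this; your bound on $k$ is an upper bound, not a lower bound, and does not prevent $k$ from exceeding $K$.

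Your fallback suggestion---letting $\delta_j$ depend on $\mass(f_j)$---is not optional: it is the proof, and it is exactly what the paper does. The paper fixes a small parameter $\beta=\beta(\epsilon)$ and sets $\delta_j^n=|E_j|^{1/2}/\beta$, so that
\[
\sum_j \delta_j = \beta^{-1/n}\sum_j |E_j|^{1/(2n)}
\qquad\text{and}\qquad
\sum_j \Bigl(\frac{|E_j|}{\delta_j^n}\Bigr)^{1/q} = \beta^{1/q}\sum_j |E_j|^{1/(2q)}
\]
are both geometric series dominated by a constant multiple of their largest (final) term. Since $|E_{J-1}|<\beta^{4n+2}=\gamma$, both sums are bounded in terms of $\beta$ alone, \emph{uniformly in the starting mass and in the number of steps}. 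This is precisely the ``two comparable geometric series'' you anticipated; your instinct there is correct, but the body of your proposal should be rewritten around the variable $\delta_j$ from the outset rather than presenting it as a contingency.
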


\begin{proof}
In the proof, we will choose a parameter $\beta>0$, $\beta\ll 1$, fulfilling several smallness conditions, that may depend on $\e$, but not on $R$, and then choose $\gamma:=\beta^{4n+2}$.\\
We assume that $f_0$ is an $\e/2$-near extremiser for $\C_q(R)$ and $|\supp f_0|\leq \beta^{4n+2}$.\\
Inductively, we will construct sequences of functions $f_j\in D$, $j=0,\ldots,J$ such that for $E_j=\supp f_j$ and the parameters $\delta_j$, defined by the formula
\begin{align}\label{delta}
	|E_j|^{1/2}=\beta\delta_j^n
\end{align}
we have 
\begin{align}\label{ONE}
\|\E f_J\|_{L^p(\prod_{j=0}^{J-1}(1+\delta_j)^2\cdot T)}
\geq \left(1-\frac{\e}{2}-\sum_{j=0}^{J-1}\left[\frac{|E_j|}{\delta_j^n}\right]^{1/q}\right) \C_q(T)\|f_j\|_q,
\end{align}
\begin{align}\label{TWO}
	|E_{j+1}|\geq \frac32 |E_{j}|,\qquad\forall 0\leq j<J
\end{align}
and
\begin{align}\label{THREE}
	|E_j|< \beta^{4n+2}\qquad\forall 0\leq j<J.
\end{align}
The aim is to construct such a sequence with
\begin{align}
	|E_{J}|\geq \beta^{4n+2}.
\end{align}
For $J=0$, \eqref{ONE} is clear, and \eqref{TWO} and \eqref{THREE} are void.
So, assume we constructed a sequence $f_0,\ldots,f_{J}$ so that \eqref{ONE}, \eqref{TWO} and \eqref{THREE} hold true.
If $|E_{J}|\geq \beta^{4n+2}$, we are done. If $|E_{J}|<\beta^{4n+2}$, we iterate. Observe first that since
$	|E_j|^{1/2}\beta^{-1} \leq \beta^{2n},$
we have 
\begin{align}\label{sonny}
	\delta_j\leq\beta^2< \delta_0
\end{align}
if $\beta$ is sufficiently small. Here, $\delta_0$ is the constant from Lemma \ref{keylem}.
We seek to apply Lemma \ref{keylem} to $f=f_J$ and the parameter $\delta=\delta_J$, with $\kappa=\kappa_J=1-\frac{\e}{2}-\sum\limits_{i=0}^{J-1}\left[\frac{|E_i|}{\delta_i^n}\right]^{1/q}$ and $\alpha=\alpha_J=\prod\limits_{i=0}^{J-1}(1+\delta_i)^2$. Therefore, we have to ensure $\kappa_J>0$ and $\alpha_J\leq 2$.
 
In fact, we will show at the end of the proof that
\begin{align}\label{alfa}
	\alpha_J\leq& 1+\e,\\
	\kappa_J\geq& 1-\e.\label{bravo}
\end{align}
provided \eqref{TWO} and \eqref{THREE} hold true. Taking that for granted for now, 
Lemma \ref{keylem} provides us with an $f_{J+1}\in D$ such that
\begin{align}\label{tica}
	|E_{J+1}|\geq 2|E_J|\left(1-\frac{|E_J|}{2\delta_J^n}\right)\frac1{(1+\delta_J)^n}
\end{align}
 and
\begin{align}\label{grebe}
\|\E f_J\|_{L^p((1+\delta_J)^2\alpha_J T_R)}
\geq \left(\kappa-\left[\frac{|E_J|}{\delta_J^n}\right]^{1/q}\right) \C_q(T)\|f_J\|_q.
\end{align}
We check that the sequence $f_0,\ldots,f_{J+1}$ fulfills \eqref{ONE}, \eqref{TWO} and \eqref{THREE}:
\eqref{ONE} is immediate, \eqref{THREE} by assumption. For \eqref{TWO},

since $\delta_j\leq\beta^2$, \eqref{tica} implies
\begin{align}
|E_{j+1}|\geq& |E_j|(2-\beta |E_j|^{1/2})(1+\delta_j)^{-n}\\
\geq& |E_j|(2-\beta)(1+\beta^2)^{-n}
\geq \frac32 |E_j|
\end{align}
provided $(2-\beta)(1+\beta^2)^{-n}\geq 3/2$. \\

We claim that the procedure stops after finitely many steps. If not, we would obtain an infinite sequence of $E_j$, which according to \eqref{THREE} would be bounded, contradicting \eqref{TWO}.\\

Hence there exist a finite $J$ such that \eqref{ONE} holds and $|E_{J}|\geq \beta^{4n+2}$. Using \eqref{alfa} and \eqref{bravo}, we see that \eqref{ONE} implies that $f_{J+1}$ is an $(\e,1+\e)$-near extremiser.\\

It remains to show that
\begin{align*}
	\alpha_J\leq& 1+\e,\\
	\kappa_J\geq& 1-\e
\end{align*}
provided \eqref{TWO} and \eqref{THREE} hold true.
Since by \eqref{TWO}, $|E_j|\leq \left(\frac23\right)^{J-1-j}|E_{J-1}|$, for any exponent $r>0$, we have
\begin{align}
	\sum_{j=0}^{J-1}|E_j|^r 
\leq& \sum_{j=0}^{J-1}\left(\frac23\right)^{r(J-1-j)}|E_{J-1}|^r
\leq c_r\beta^{(4n+2)r}.
\end{align}
For $r=1/(2q)$, we obtain
\begin{align}
	\sum_{j=0}^{J-1}\left[\frac{|E_j|}{\delta_j^n}\right]^{1/q}
	=\beta^{1/q} \sum_{j=0}^{J-1}|E_j|^{1/(2q)}
	\leq c_\frac{1}{2q}\beta^{(2n+2)/q} \leq \e/2,
\end{align}
provided $\beta$ is sufficiently small, and hence \eqref{bravo}.
Furthermore,
\begin{align*}
	\log(\prod_{j=0}^{J-1}(1+\delta_j)^2) 
	\leq& 2\sum_{j=0}^{J-1} \delta_j \\
	=& 2\beta^{-1/n}\sum_{j=0}^{J-1} |E_j|^{1/(2n)}\\
	\leq& 2c_\frac{1}{2n}\beta^{\frac{4n+2}{2n}-\frac1n}\\
	=& 2c_\frac{1}{2n}\beta^{2}\leq\beta,
\end{align*}
again for sufficiently small $\beta$.
Then
\begin{align}
	\alpha_J=\prod_{j=0}^{J-1}(1+\delta_j)^2
	\leq \exp(\beta)\leq 1+\e,
\end{align}
if $\beta$ is small enough with respect to $\e$.
\end{proof}

As an immediate consequence, we can deduce the existence of near-extremisers with large mass:

\begin{cor}\label{ext}
Let $\frac{2n+2}{n}\leq p\leq q<\infty$.
For any $\e>0$, there exists a $\gamma>0$ such that the following holds:
For all $R\geq1$ there exists an $f\in D$ such that $|\supp f|\geq\gamma $ and
\begin{align}
	\|\E f\|_{L^p((1+\e)T_R)}\geq (1-\e)\C_q(R) \|f\|_{L^q(S)}.
\end{align}
\end{cor}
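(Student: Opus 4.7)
The strategy is straightforward given Lemma \ref{iter}: either a generic near extremiser already has large mass, or we upgrade it via the iteration procedure. The approach will be to fix $\varepsilon>0$, let $\gamma>0$ be the constant produced by Lemma \ref{iter} for this $\varepsilon$ (and the fixed exponents $p<q$), and then argue uniformly in $R\geq 1$.

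For each such $R$, I would first invoke the existence of $\varepsilon/2$-near extremisers (which is immediate from the definition of $\mathcal{C}_q(R)$ as a best constant) to obtain some $f_0\in D$ with
\begin{align*}
    \|\E f_0\|_{L^p(T_R)} \geq (1-\varepsilon/2)\,\C_q(R)\,\|f_0\|_{L^q(S)}.
\end{align*}
At this point I would split into two cases according to the size of $\mass(f_0)$.

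If $\mass(f_0)\geq \gamma$, then $f:=f_0$ already does the job: it is an $\varepsilon/2$-near extremiser on $T_R\subset (1+\varepsilon)T_R$, hence a fortiori an $(\varepsilon,1+\varepsilon)$-near extremiser, and by construction has mass at least $\gamma$. If instead $\mass(f_0)<\gamma$, then the hypotheses of Lemma \ref{iter} are met, so the lemma supplies an $f\in D$ with $\mass(f)\geq\gamma$ satisfying
\begin{align*}
    \|\E f\|_{L^p(T_{(1+\varepsilon)R})} \geq (1-\varepsilon)\,\C_q(R)\,\|f\|_{L^q(S)},
\end{align*}
which is precisely the required $(\varepsilon,1+\varepsilon)$-near extremiser estimate.

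There is essentially no obstacle beyond unpacking definitions: Lemma \ref{iter} already encodes the hard induction-on-scales work (mass doubling per step, control of the accumulated dilation $\alpha_J\leq 1+\varepsilon$ and of the loss $\kappa_J\geq 1-\varepsilon$), and the corollary only packages its output as a uniform-in-$R$ existence statement. The one thing worth noting is that $\gamma$ is chosen depending on $\varepsilon$ alone (through $\beta$), independently of $R$, so the bound indeed holds for every $R\geq 1$ with the same $\gamma$.
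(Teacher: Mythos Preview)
Your proposal is correct and follows essentially the same approach as the paper's proof: pick $\gamma$ from Lemma~\ref{iter}, choose an $\varepsilon/2$-near extremiser $f_0$, and either accept it if $\mass(f_0)\geq\gamma$ or feed it into Lemma~\ref{iter} otherwise. The only cosmetic difference is that you spell out explicitly that $T_R\subset (1+\varepsilon)T_R$ in the first case, which the paper leaves implicit.
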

\begin{proof}
Given $\e>0$, we choose $\gamma>0$ as in Lemma \eqref{iter}. Let $f_0$ be a $1-\e/2$-near extremiser of $\C_q(R)$. If $|\supp f_0|\geq\gamma$, we are done by choosing $f=f_0$.
If $|\supp f_0|<\gamma$, the aforementioned lemma applies.\\

\end{proof}

We can now prove our main result, Theorem $\ref{mainthm}$. We have to show that 
\begin{align}
	\C_\infty(R) \leq \C_q(R) \leq C_1^{1/q} \C_\infty(R).
\end{align}
\begin{proof}
The first inequality is a simple consequence of Hölders inequality. The second inequality is trivial for the case $q=\infty$, and for $q<\infty$, by an interpolation argument, it is enough to consider the case $q=p$.\\
Choose $\gamma>0$ according to Corollary \eqref{ext}, for $\e=1/2$ and $q=p$.
Then for all $R>0$, there exist an $f\in D$ such that
\begin{align}
	\gamma^{1/p}\C_p(R) \leq \|f\|_p\ \C_p(R)
	\leq 2\|\E f\|_{L^p(T_{2R})}.	
\end{align}
On the other hand,
\begin{align}
	\gamma^{1/p}\C_p(R) 
	\leq 2\|\E f\|_{L^p(T_{2R})} \leq 2 \C_\infty(2R)\leq 2^{1+1/p}\C_\infty(T),	
\end{align}
hence we can choose $C_1=\dfrac{2^{p+1}}{\gamma}$.
\end{proof}

\bigskip

\thispagestyle{empty}

\renewcommand{\refname}{References}

\end{document}